	\definecolor{red}{rgb}{1,0,0} 
	\definecolor{green}{rgb}{0,1,0} 
	\definecolor{blue}{rgb}{0,0,1} 
	\definecolor{darkblue}{rgb}{0,0,0.6}
	\definecolor{darkred}{rgb}{0.6,0,0}
\newtheorem{theorem}{Theorem}[section]
\newtheorem{claim}{Claim}
\newtheorem{lemma}[theorem]{Lemma}
\theoremstyle{definition}
\newtheorem{definition}{Definition}[section]
\theoremstyle{remark}
\numberwithin{equation}{section}
\newcommand{\al}{\alpha}
\newcommand{\be}{\beta}
\newcommand{\ga}{\gamma}
\newcommand{\si}{\sigma}
\newcommand{\om}{\omega}
\newcommand{\md}{\mathrm{d}}   
\newcommand{\vd}{\,\md}
\newcommand{\me}{\mathrm{e}}   
\newcommand{\x}{\times}
\newcommand{\pd}{\partial}   
\newcommand{\p}{\ga}  
\newcommand{\R}{\mathbf{R}}   
\newcommand{\PS}{\Omega}  
\newcommand{\E}{\mathbb{E}}  
\newcommand{\Prob}{\mathbb{P}}  
\newcommand{\Filt}{\mathcal{F}}  
\newcommand{\BM}{w}  
\newcommand{\loc}{\mathrm{loc}}
\newcommand{\Dom}{\mathcal{O}}   
\newcommand{\Q}{\mathcal{Q}} 
\newcommand{\pdist}[1]{| #1 |_{{\bf p}}}
\newcommand{\Ct}{C}  
\newcommand{\bL}{L_\om}  
\newcommand{\aij}{a^{ij}}   
\newcommand{\sik}{\si^{ik}}
\newcommand{\sjk}{\si^{jk}}
\newcommand{\bi}{{b^{i}}}
\newcommand{\xdij}{{x^{i}x^{j}}}  
\newcommand{\xdi}{{x^{i}}}
\newcommand{\dij}{{ij}}  
\newcommand{\di}{{i}}
\newcommand{\Dij}{D_{ij}}  
\newcommand{\Di}{D_{i}}
\newcommand{\pc}{{\lambda}}  
\newcommand{\cm}{\varpi}   
\newcommand{\m}{\upkappa} 
\title{
A Schauder estimate for stochastic PDEs
}
\author{Kai Du}
\address
	{Institute for Mathematics and its Applications, School of Mathematics and Applied Statistics,
	University of Wollongong,
	Wollongong, NSW 2522, AUSTRALIA.}
\email{kaid@uow.edu.au}
\author{Jiakun Liu}
\address
	{Institute for Mathematics and its Applications, School of Mathematics and Applied Statistics,
	University of Wollongong,
	Wollongong, NSW 2522, AUSTRALIA.}
\email{jiakunl@uow.edu.au}
\begin{document}

\begin{abstract}
Considering stochastic partial differential equations of parabolic type with random coefficients in vector-valued H\"older spaces, we obtain a sharp Schauder estimate. 
As an application, the existence and uniqueness of solution to the Cauchy problem is also proved.

\medskip
\noindent \textsc{AMS Subject Classification.} 60H15, 35R60
\end{abstract}
\maketitle

\section{Introduction}

We consider the second-order stochastic partial differential equations (SPDEs) of the It\^o type
\begin{equation}\label{chief}
\md u = (\aij u_\xdij + \bi u_\xdi + c u + f)\vd t + (\sik u_\xdi + \nu^k u + g^k) \vd \BM^k_t,
\end{equation}
in $\R^n\x(0,\infty)$, where $\BM^k$ are countable independent standard Wiener processes
defined on a filtered complete probability space $(\PS,\Filt,(\Filt_t)_{t\in \R},\Prob)$ for $k=1,2,\cdots$.
The matrix $a = (\aij)$ is symmetric, and the uniform {\em parabolic condition} is assumed throughout the paper, namely there is a constant $\pc>0$ such that 
\begin{equation}\label{parab}
2\aij - \sik\sjk \ge \pc \delta_{ij}
\quad\text{on}~~ 
\R^n\x(0,\infty)\x\PS,
\end{equation}
where $\delta_{ij}$ is the Kronecker delta. 
The random fields $u, \aij,\bi,f$ are all real-valued, while $\si^i, \nu$ and $g$ take values in $\ell^2$.
One of the most important examples of \eqref{chief} is the Zakai equation arising in the nonlinear filtering problem \cite{zakai1969}.

The regularity of solutions of \eqref{chief} in Sobolev spaces has already been investigated by many researchers.
Various aspects of $L^2$-theory were studied since 1970s, see \cite{pardoux1975these, krylov1977cauchy, rozovskii1990stochastic, da1992stochastic} and references therein.
Later on, a complete $L^p$-theory was established by Krylov in 1990s, see \cite{krylov1996l_p,krylov1999analytic}. 
By using Sobolev's embedding, one then has the regularity in H\"older spaces, which is however not sharp. 
As an open problem mentioned in \cite{krylov1999analytic}, 
one desires a sharp $C^{2+\alpha}$-theory in the sense that not only that for $f, g$ belonging to a proper space $\mathcal{F}$,
the solution belongs to some kind of stochastic $C^{2+\alpha}$-spaces, 
but also that every element of this stochastic space can be obtained as a solution for certain $f, g$ belonging to the same $\mathcal{F}$.

The purpose of this paper is to establish a Schauder theory of Equation \eqref{chief}, which is sharp in the above sense. 
In order to state our main results, we first introduce a notion of quasi-classical solutions. 

\begin{definition}
A random field $u$ is called a \emph{quasi-classical} solution of \eqref{chief} if
\begin{enumerate}
\item For each $t\in(0,\infty)$, $u(\cdot,t)$ is a twice strongly differentiable function from $\R^n$ to $\bL^{\p} := L^\p(\PS; \R)$ for some $\p\geq2$; and \\
\item For each $x\in\R^n$, the process $u(x,\cdot)$ satisfies \eqref{chief}
in the It\^o integral form with respect to the time variable.
\end{enumerate}
If furthermore, $u(\cdot,t,\om)\in C^2(\R^n)$ for any $(t,\om)\in(0,\infty)\x\PS$, then $u$ is a classical solution of \eqref{chief}.
\end{definition}

Analogously to classical H\"older spaces, we can define the $\bL^\p$-valued H\"older spaces $\Ct^{m+\al}_x(\Q_T;\bL^\p)$ and $\Ct^{m+\al,\al/2}_{x,t}(\Q_T;\bL^\p)$, where $T>0$, $\Q_T=\R^n\x(0,T)$,
and $\bL^{\p} := L^\p(\PS; \R)$ is a Banach space equipped with the norm
$\|\xi\|_{\bL^\p} := (\E |\xi|^{\p})^{1/\p}$.
More specifically, we define $\Ct^{m+\al}_x(\Q_T;\bL^\p)$ to be the set of all $\bL^\p$-valued strongly continuous functions 
$u$ such that 
\begin{equation}\label{norm1}
|u|_{m+\al;\Q_T} := \sup_{\substack{(x,t)\in\Q_T\\ |\be|\le m}} \|D^\be u(x,t)\|_{\bL^\p} 
+ \sup_{\substack{t,\,x\neq y\\ |\be|= m}} 
{\|D^\be u(x,t) - D^\be u(y,t)\|_{\bL^\p} \over |x-y|^\al} < \infty.
\end{equation}
Using the parabolic module
$\pdist{X} := |x| + \sqrt{|t|}$ for $X=(x,t)\in\R^n\x\R$,
we define $C^{m+\al,\al/2}_{x,t}(\Q_T;\bL^\p)$ to be the set of all $u\in C^{m+\al}_x(\Q_T;\bL^\p)$
such that
\begin{equation}\label{norm2}
|u|_{(m+\al,\al/2);\Q_T} := |u|_{m;Q_T} + \sup_{|\be|=m,\ X\neq Y} 
{\|D^\be u(X) - D^\be u(Y)\|_{\bL^\p} \over \pdist{X-Y}^\al} < \infty.
\end{equation}
Similarly, we can define the norms \eqref{norm1} and \eqref{norm2} over a domain $Q=\Dom \x I$, for any domains $\Dom\subset\R^n$ and $I\subset\R$.

\vspace{5pt}
Our main result is the following
\begin{theorem}\label{thm1}
Assume that the classical $C^\al_x$-norms of $\aij, \bi,c,\si^i,\si^i_{\!x},\nu,\nu_x$ are all dominated by a constant $K$
uniformly in $(t,\om)\in(0,T)\x\PS$, and the condition \eqref{parab} is satisfied.
If $f\in \Ct^{\al}_x(\Q_T;\bL^\p)$, $g\in\Ct^{1+\al}_x(\Q_T;\bL^\p)$ for some $\p\ge 2$,
then Equation~\eqref{chief} with a zero initial condition
admits a unique quasi-classical solution $u$ in $\Ct^{2+\al,\al/2}_{x,t}(\Q_T;\bL^\p)$.
\end{theorem}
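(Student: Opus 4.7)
The plan is to follow the classical Schauder strategy, adapted to the SPDE setting by exploiting the super-parabolic condition \eqref{parab} and working throughout in the $\bL^\p$-valued function spaces defined above. First, I would absorb the zeroth- and first-order terms $\bi u_\xdi+cu$ and $\nu^k u$ into the forcing: the parabolic-interpolation inequality
\[
|v|_{1+\al;\Q_T}\le \eps\,|v|_{(2+\al,\al/2);\Q_T}+C_\eps\,|v|_{0;\Q_T},
\]
together with the uniform $C^\al_x$-bounds on $\bi,c,\nu,\nu_x$ and a Gr\"onwall step, lets these lower-order contributions be dominated by a small fraction of the leading $\Ct^{2+\al,\al/2}$-norm and absorbed. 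This reduces the heart of the matter to proving an a priori Schauder estimate for the purely leading-order equation
\[
\md u=\aij u_\xdij\vd t+\sik u_\xdi\vd\BM^k_t+f\vd t+g^k\vd\BM^k_t.
\]

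Next I would freeze the coefficients $\aij,\sik$ at a base point $X_0=(x_0,t_0)$ to obtain a model problem whose coefficients are random but spatially constant, and hence commute with all spatial derivatives. For this model problem, condition \eqref{parab} guarantees that the effective diffusion $2\aij(t,\om)-\sik\sjk(t,\om)$ arising from applying It\^o's formula to $u^2$ is uniformly elliptic. Using Krylov's $L^\p$-theory \cite{krylov1996l_p,krylov1999analytic} as the $L^\p$-input on $D^2u$, I would then derive Campanato-type mean-oscillation bounds on parabolic cylinders, which are equivalent to the $\Ct^{2+\al,\al/2}_{x,t}(\Q_T;\bL^\p)$-norm. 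Returning to variable coefficients uses the standard perturbation step: the difference $(\aij(x,t,\om)-\aij(x_0,t_0,\om))u_\xdij$ is small on a parabolic cylinder of radius $r$ because $\aij\in C^\al_x$ uniformly, and a covering/partition-of-unity argument patches local estimates into a global one.

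Uniqueness follows from an energy identity obtained by applying It\^o's formula to $\|u\|_{\bL^2}^2$ for $u$ the difference of two quasi-classical solutions; the parabolicity \eqref{parab} then yields
\[
\tfrac{\md}{\md t}\E|u|^2+\pc\,\E|Du|^2\le C\,\E|u|^2,
\]
and Gr\"onwall forces $u\equiv 0$. For existence, I would mollify $f,g$ and the coefficients, solve \eqref{chief} in Krylov's $L^p$-Sobolev framework to obtain a smooth solution, apply the a priori $\Ct^{2+\al,\al/2}_{x,t}$-estimate (depending only on $K,\pc,\p,n,T$), and extract a limit as the mollification tightens; the limit is a quasi-classical solution in the required space.

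The principal obstacle is the sharp Schauder bound for the model problem, and in particular the $\al/2$-H\"older continuity in $t$ of $D^2 u$. In the deterministic parabolic theory this regularity is read off directly from the equation for $u_t$, but here $u$ is only a semimartingale in time and cannot be differentiated pathwise, so one cannot argue via $u_t$. Instead I would combine the Burkholder--Davis--Gundy inequality with a careful regularity analysis of the stochastic convolution against the random heat kernel generated by $2\aij-\sik\sjk$, aiming at the bound $\|D^2u(x,t)-D^2u(x,s)\|_{\bL^\p}\le C|t-s|^{\al/2}$. This is the technical core of the proof, and it is what forces the hypothesis $\p\ge 2$ in the statement.
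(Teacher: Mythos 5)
Your overall skeleton---freeze the coefficients, prove a sharp estimate for the spatially constant (but still random) model equation, perturb back, absorb lower-order terms by interpolation, and then get existence by approximation and uniqueness from the a priori bound---matches the paper's strategy. But the technical core, which you correctly identify as the $\Ct^{2+\al,\al/2}$ estimate for the model problem and in particular the $\al/2$-H\"older continuity of $u_{xx}$ in $t$, is where your proposal has a genuine gap. You propose to obtain the time regularity from ``a careful regularity analysis of the stochastic convolution against the random heat kernel generated by $2\aij-\sik\sjk$.'' This is precisely the approach the paper contrasts itself with: the heat-kernel method of Mikulevicius works only when $\aij$ is deterministic and $\sik\equiv 0$. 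When $\aij(t,\om)$ and $\sik(t,\om)$ are merely predictable processes, the ``kernel'' $p(t,s;x,y)$ depends on the coefficients (and, through the natural representation by random characteristics, on the Wiener path) over $[s,t]$, so the candidate convolution $\int_0^t\!\int p(t,s;x,y)\,g^k(y,s)\vd y\vd\BM^k_s$ is an \emph{anticipative} integral; BDG does not apply and no usable kernel bounds are available. Your plan as stated therefore does not go through in the setting of the theorem, which is exactly the case of random coefficients with $\sik\neq0$.

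There is also an internal inconsistency worth noting: if you could actually establish Campanato-type decay of the mean oscillation of $u_{xx}$ over \emph{parabolic} cylinders $Q_r(X_0)$ (in the $\bL^\p$-valued sense), the $\al/2$-continuity in $t$ would already follow from the vector-valued Campanato embedding with respect to the parabolic distance $\pdist{X-Y}$, and your final paragraph would be unnecessary; but you never explain how to get that decay, deferring instead to ``Krylov's $L^\p$-theory as the $L^\p$-input,'' which gives Sobolev, not oscillation, information. The paper fills exactly this hole with Wang's perturbation scheme: approximate $u$ on shrinking cylinders $Q_{\rho^\m}$ by solutions $u^\m$ of Cauchy problems with data frozen at the centre, control $D^m(u^\m-u^{\m+1})$ by integral ($L^2$ in $(x,t)$, $L^\p$ in $\om$) estimates rather than kernels or maximum principles, and sum the resulting Dini series; the estimate \eqref{Dini.est} is stated in terms of $\delta=\pdist{X-Y}$ and hence delivers the space \emph{and} time H\"older continuity of $u_{xx}$ simultaneously. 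Two smaller points: $\p\ge2$ is needed for the $\bL^\p(\PS)$-valued stochastic calculus (It\^o isometry/BDG and the underlying $L^2$ solvability), not specifically for the time-regularity step; and your uniqueness argument via It\^o's formula for $\|u\|_{L^2(\R^n)}^2$ presupposes spatial square-integrability that solutions in $\Ct^{2+\al,\al/2}_{x,t}(\Q_T;\bL^\p)$ need not have---uniqueness in this class follows at once from the a priori estimate \eqref{schauder.est} applied to the difference of two solutions.
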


We remark that the problem with nonzero initial value can be easily reduced to our case by a simple transform.
We also remark that by an anisotropic Kolmogorov continuity theorem (see \cite{dalang2007hitting}), 
if $\al \p > n+2$, the above obtained quasi-classical solution $u$ has a $C^{2+\delta,\delta/2}$ modification for $0<\delta<\al-(n+2)/{\p}$ as a classical solution of \eqref{chief}.

\vspace{5pt}
In order to prove the solvability in Theorem \ref{thm1}, by means of the standard method of continuity, it suffices to establish the following a priori estimate. 

\begin{theorem}\label{schauder}
Under the hypotheses of Theorem \ref{thm1}, 
letting $u \in \Ct^{2,0}_{\loc}(\Q_T;\bL^\p)$ be a quasi-classical solution of \eqref{chief} and $u(\cdot,0) = 0$,
there is a positive constant $C$ depending only on $n,\pc,\p,\al$ and $K$ such that
\begin{equation}\label{schauder.est}
|u|_{(2+\al,\al/2);\Q_T} 
\le C \me^{CT} (|f|_{\al;\Q_T} + |g|_{1+\al;\Q_T}).
\end{equation}
\end{theorem}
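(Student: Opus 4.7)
The plan is to follow the classical ``freezing-coefficients'' strategy, adapted to the stochastic It\^o setting and formulated throughout in the $\bL^\p$-valued H\"older spaces. First, by linearity, I reduce \eqref{chief} to the \emph{model equation}
\begin{equation*}
\md u = \aij_0\, u_\xdij\vd t + (\sik_0\, u_\xdi + g^k)\vd \BM^k_t + f\vd t,
\end{equation*}
in which $\aij_0, \sik_0$ are constant in $(x,t,\om)$ and satisfy \eqref{parab}. The lower-order terms $\bi u_\xdi$, $c u$, $\vk u$ and the $x$-dependence of the principal coefficients are to be recovered afterwards via perturbation and interpolation, and the Gr\"onwall-type argument that eventually absorbs $|u|_{0;\Q_T}$ is what produces the $\me^{CT}$ factor.

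For the model equation, the main device is a random change of spatial variable: by the It\^o-Wentzell formula, $v(y,t) := u(y - \sik_0 \BM^k_t, t)$ (summed over $k$) satisfies, pathwise in $\om$, the \emph{deterministic} parabolic equation
\begin{equation*}
v_t = \bigl(\aij_0 - \tfrac{1}{2}\sik_0\sjk_0\bigr)\,v_{y^iy^j} + \tilde f + (\text{Duhamel correction from }g),
\end{equation*}
whose principal part is uniformly elliptic by \eqref{parab}. Applying the classical deterministic Schauder estimate pathwise to $v$, then inverting the random shift and using Burkholder-Davis-Gundy to control the stochastic convolution against $g^k$, delivers the sharp spatial $\Ct^{2+\al}_x$ bound together with the temporal $\al/2$-H\"older bound in the $\bL^\p$-norm for the model case.

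With the model estimate in hand, I would treat variable coefficients by localisation. For each $x_0\in\R^n$ and a small radius $r>0$, I multiply $u$ by a cutoff $\zeta$ supported in $B_r(x_0)$, so that $\zeta u$ solves a model SPDE (coefficients frozen at $x_0$) whose right-hand side is $f, g$ plus commutator terms of the form $(\aij(\cdot)-\aij(x_0))u_\xdij$, $(\sik(\cdot)-\sik(x_0))u_\xdi$, and lower-order contributions generated by derivatives of $\zeta$. The assumed H\"older continuity of the coefficients makes the principal perturbation terms carry a factor proportional to $K r^\al$ times $|u|_{(2+\al,\al/2);\Q_T}$, which, after fixing $r$ small depending on $K$, is absorbed into the left-hand side. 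The remaining lower-order contributions are dominated by interpolating between $|u|_{0;\Q_T}$ and the full H\"older seminorm, and finally $|u|_{0;\Q_T}$ is absorbed through the Gr\"onwall step alluded to above.

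The main obstacle I anticipate is managing the sharp $\al/2$-H\"older modulus in time. In the deterministic parabolic case this exponent is automatic from the heat semigroup, but here it must be obtained by combining BDG on the It\^o integral against $g^k$ with Kolmogorov-type bounds on the drift, all while carefully tracking how the random shift $\sik_0 \BM^k_t$ -- which is itself only $(1/2)^-$-H\"older in $t$ -- interacts with the spatial H\"older modulus of $v$, so that the composition $u(x,t)=v(x+\sik_0\BM^k_t,t)$ still has exactly $\al/2$ time regularity in $\bL^\p$. This delicate interplay is precisely what forces the entire analysis into $\bL^\p$-valued rather than pathwise H\"older spaces and constitutes the technical heart of the argument.
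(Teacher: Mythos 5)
Your outline reproduces the high-level structure of the paper's proof (freezing coefficients, interpolation, Gr\"onwall iteration for the $\me^{CT}$ factor), but the engine you propose for the model equation does not work here, for two related reasons.

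\textbf{The frozen model equation is not a constant-coefficient equation.} Freezing $\aij(x,t,\om)$, $\sik(x,t,\om)$ at $x_0$ removes only the $x$-dependence; the coefficients still depend on $(t,\om)$ as predictable processes. The model equation one actually has to estimate is therefore exactly \eqref{model}, with $\aij(t,\om)$, $\sik(t,\om)$ random and time-dependent. You cannot further freeze $(t,\om)$, and the It\^o--Wentzell shift $y\mapsto y-\int_0^t\sik\vd w^k_s$ then produces a deterministic (in the transformed variables) parabolic operator whose coefficients are still merely measurable in $t$ and random in $\om$ --- not a constant-coefficient operator to which the textbook Schauder estimate applies.

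\textbf{The pathwise Schauder argument fails because the data are not pathwise H\"older.} The hypothesis is $f\in \Ct^{\al}_x(\Q_T;\bL^\p)$, i.e., $\|f(x,t)-f(y,t)\|_{\bL^\p}\le C|x-y|^\al$ with the expectation taken \emph{before} the supremum in $x,y$. This does not entail that $x\mapsto f(x,t,\om)$ is H\"older for a.e.\ $\om$, much less with a H\"older constant in $L^\p(\PS)$. Consequently, even after the random change of variables there is no deterministic Schauder estimate to ``apply pathwise'': the input data simply do not have the pathwise regularity that the classical estimate requires. This is precisely why the desired estimate (and the space $\Ct^{2+\al,\al/2}_{x,t}(\Q_T;\bL^\p)$) must be approached intrinsically in the $\bL^\p$-valued framework. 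The paper does this by a Wang-type perturbation argument: it builds a sequence of auxiliary Cauchy problems on shrinking parabolic cylinders with polynomial right-hand sides, controls the differences $u^\m-u^{\m+1}$ by $\bL^\p$-integral estimates rather than any pathwise maximum principle, and sums the telescoping series under a Dini condition (Lemma~\ref{Dini}). That construction never leaves the $\bL^\p$-valued setting, which is exactly what avoids both obstructions above.

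So while your overall scaffolding (freeze, localise, interpolate, iterate in time) is the right shape, the key lemma you would feed into it --- a sharp $\bL^\p$-valued $\Ct^{2+\al,\al/2}$ bound for the $(t,\om)$-dependent model equation \eqref{model} --- is not obtainable via It\^o--Wentzell plus pathwise Schauder, and needs a genuinely $\bL^\p$-valued argument such as the one the paper supplies.
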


The H\"older regularity in spaces $\Ct^{m+\al}_x(\Q_T;\bL^\p)$ for Equation \eqref{chief} was previously investigated by Rozovsky \cite{rozovskiui1975stochastic},
and later was improved by Mikulevicius \cite{mikulevicius2000cauchy}.
However, both works addressed only the equations with nonrandom coefficients and with no derivatives of the unknown function in the stochastic term, namely $\aij$ is deterministic and $\sik\equiv0$.
Moreover, both previous works did not obtain the time-continuity of second-order derivatives of $u$, comparing to our estimate \eqref{schauder.est} and Theorem \ref{thm1}.

The Schauder estimate we obtained in Theorem \ref{schauder} is sharp in the sense that mentioned in \cite{krylov1999analytic}, and is for the general form \eqref{chief} with natural assumptions, where all coefficients are random. 
The approach to $C^{2+\alpha}$-theory in \cite{mikulevicius2000cauchy} was based on several delicate estimates for the heat kernel.
Our method is completely different and more straightforward by combining certain integral estimates and a perturbation argument of Wang \cite{Wang2006}. 
A sketch of proof of Theorem \ref{schauder} is given in Section \ref{s2}.
Full details in addition to applications and further remarks are contained in our separate paper \cite{DL}.

\section{Schauder estimates}\label{s2}

In this section we give an outline of the proof of our main estimate \eqref{schauder.est}.
For simplicity we will first deal with a simplified model equation, and then extend to the general ones.

Consider the model equation
\begin{equation}\label{model}
\md u = (\aij u_\dij + f )\vd t + (\sik u_\di + g^k) \vd \BM^k_t,
\end{equation}
where $\aij, \sik$ are predictable processes, independent of $x$, satisfying the condition~\eqref{parab}. 
We shall consider the model equation in the entire space $\R^n\x\R$.
Suppose that $f(t,\cdot)$ and $g_x(t,\cdot)$ are Dini continuous with respect to $x$ uniformly in $t$, namely
\[
\int_0^1 \frac{\cm(r)}{r} \vd r < \infty,
\]
where
\[
\cm(r) = \sup_{t\in\R,\,|x-y| \le r} 
(\|f(t,x) - f(t,y)\|_{\bL^\p}+\|g_x(t,x) - g_x(t,y)\|_{\bL^\p}).
\]
For any $r>0$, we denote 
\begin{equation}\label{BQ}
B_r(x) = \{y\in\R^n:|y-x|<r\},
\quad
Q_r(x,t) = B_r(x) \x (t-r^2,t),
\end{equation}
and further define $B_r = B_r(0)$ and $Q_r = Q_r(0,0)$.

\begin{lemma}\label{Dini}
Let $u\in C^{2,0}_{x,t}(Q_1;\bL^\p)$ be a quasi-classical solution of \eqref{model}. 
Then there is a positive constant $C$, depending only on
$n,\pc$ and $\p$ such that for any $X,Y \in Q_{1/4}$, 
\begin{equation}\label{Dini.est}
\|u_{xx}(X) - u_{xx}(Y)\|_{\bL^\p}
\le C \left[ \delta M_1
+ \int_0^{\delta} \frac{\cm(r)}{r}\vd r
+ \delta \int_{\delta}^1 \frac{\cm(r)}{r^2}\vd r
\right],
\end{equation}
where $\delta = \pdist{X-Y}$ and 
$ M_1 = |u|_{0;Q_{1}} + |f|_{0;Q_{1}}
+ |g|_{1;Q_{1}}$.
\end{lemma}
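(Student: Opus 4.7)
The plan is to combine interior estimates for the homogeneous version of \eqref{model} with a Campanato--Wang-style dyadic perturbation iteration, exploiting that $a^{ij}$ and $\sigma^{ik}$ depend only on $(t,\omega)$, so that the equation is translation invariant in $x$. As a preparatory step I would establish the following interior estimate for solutions $v$ of the homogeneous equation $\md v = a^{ij} v_{ij}\vd t + \sigma^{ik} v_i\vd\BM^k_t$: for every multi-index $\beta$,
\begin{equation*}
\|D^\beta v\|_{L^\infty(Q_{r/2};\bL^\p)} \le C_\beta\, r^{-|\beta|}\,\|v\|_{L^\infty(Q_r;\bL^\p)}.
\end{equation*}
Since $D^\beta v$ again solves the homogeneous equation by the $x$-independence of $a,\sigma$, this follows by bootstrapping Krylov's $L^p$-theory for SPDEs and applying a parabolic Sobolev embedding with $p$ sufficiently large, after rescaling $Q_r\to Q_1$.

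The second ingredient is a scale- and center-dependent decomposition. Fixing a center $Z\in Q_{1/4}$ (taken, by translation, as the origin) and dyadic scale $r_k=2^{-k}$, I would write $u=p^Z+v^Z_k+w^Z_k$, where $p^Z(x,t)=\phi_0(t)+\phi_1^i(t)x^i$ is an affine-in-$x$ random function whose coefficients are determined by explicit linear SDEs driven by $f(0,\cdot)$, $g^k(0,\cdot)$, $g^k_{x_i}(0,\cdot)$; $v^Z_k$ solves the homogeneous equation with the appropriate initial data; and $w^Z_k$ is the residual driven by the oscillation forcings $\tilde f(y,s):=f(y,s)-f(0,s)$ and $\tilde g^k(y,s):=g^k(y,s)-g^k(0,s)-g^k_{x_i}(0,s)y^i$ suitably localized by cutoff to $Q_{r_k}$, with zero initial data. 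By construction $\|\tilde f\|_{L^\infty(Q_{r_k};\bL^\p)}\le\cm(r_k)$ and $\|\tilde g\|_{L^\infty(Q_{r_k};\bL^\p)}\le r_k\cm(r_k)$, and Krylov's $L^p$-theory (applied after rescaling $Q_{r_k}\to Q_1$), combined with parabolic Sobolev embedding, yields the scale-respecting bounds
\begin{equation*}
\|w^Z_k\|_{L^\infty(Q_{r_k/2};\bL^\p)}\le Cr_k^2\,\cm(r_k), \qquad \|D^2 w^Z_k\|_{L^\infty(Q_{r_k/4};\bL^\p)}\le C\,\cm(r_k).
\end{equation*}
Because $D^2 p^Z\equiv 0$, one has $u_{xx}=v^Z_{k,xx}+w^Z_{k,xx}$ on $Q_{r_k/2}$.

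For the conclusion, fix $X,Y\in Q_{1/4}$ with $\delta=\pdist{X-Y}$, let $Z$ be their midpoint, and choose $N$ so that $2^{-N-1}\le\delta<2^{-N}$. Each difference $v^Z_k-v^Z_{k+1}$ satisfies the homogeneous equation on $Q_{r_{k+1}}$ with sup-norm at most $Cr_k^2\cm(r_k)$ (by the $w$-bound), so the interior estimate with $|\beta|=3$ gives
\begin{equation*}
\|D^3(v^Z_k-v^Z_{k+1})\|_{L^\infty(Q_{r_{k+1}/2};\bL^\p)}\le Cr_k^{-1}\cm(r_k).
\end{equation*}
Writing
\begin{equation*}
u_{xx}(X)-u_{xx}(Y) = [v^Z_{0,xx}(X)-v^Z_{0,xx}(Y)] + \sum_{k=0}^{N-1}\Delta_k + [w^Z_{N,xx}(X)-w^Z_{N,xx}(Y)],
\end{equation*}
with $\Delta_k=(v^Z_{k+1,xx}-v^Z_{k,xx})(X)-(v^Z_{k+1,xx}-v^Z_{k,xx})(Y)$, the base term is bounded by $C\delta M_1$ via the interior estimate at $r_0=1/2$; each $\Delta_k$ is controlled by $C\delta r_k^{-1}\cm(r_k)$ via the parabolic mean-value estimate, summing to $\delta\int_\delta^1\cm(r)/r^2\vd r$; and the final fine-scale remainder is dominated by $C\cm(r_N)\lesssim\int_0^\delta \cm(r)/r\vd r$, using monotonicity and Dini-integrability of $\cm$.

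The main obstacle is the sup-norm bound on $w^Z_k$ with the sharp scaling $r_k^2\cm(r_k)$: Krylov's $L^p$-theory delivers $W^{2,p}$-estimates in the integrated $L^p(\Omega;L^p_{xt})$ sense, and upgrading to a pointwise-in-$(x,t)$ bound in $\bL^\p$ requires taking $p$ large enough that parabolic Sobolev embedding $W^{2,p}_{xt}\hookrightarrow L^\infty$ applies, together with a cutoff construction preserving the $r_k^2$-scaling. A secondary technical point is that SPDE boundary value problems on the cylinder $Q_{r_k}$ are delicate, so $w^Z_k$ is best defined globally via Duhamel with cutoff forcings; the far-field leakage must then be shown not to contaminate the local estimate on $Q_{r_k/2}$.
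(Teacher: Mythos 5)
Your overall strategy — a Wang-style dyadic perturbation using the translation invariance of the frozen equation, affine correctors for the stochastic term, and a telescoping sum of homogeneous differences controlled by interior estimates — is the same skeleton as the paper's proof, and your decomposition $u=p^Z+v^Z_k+w^Z_k$ corresponds exactly to the paper's $u^\m$ (your $p^Z+v^Z_k$) and $u-u^\m$ (your $w^Z_k$). The difference is in how the residual is measured, and that is where your proposal has a genuine gap.

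You propose to control $w^Z_k$ by the two sup-norm bounds
\begin{equation*}
\|w^Z_k\|_{L^\infty(Q_{r_k/2};\bL^\p)}\le Cr_k^2\,\cm(r_k), \qquad \|D^2 w^Z_k\|_{L^\infty(Q_{r_k/4};\bL^\p)}\le C\,\cm(r_k),
\end{equation*}
obtained from Krylov's $L^p$-theory plus parabolic Sobolev embedding. The second bound is false: $W^{2,1}_p$ embeds into $C^{\al,\al/2}$ only for $\al=2-(n+2)/p<2$, never into $C^2$, and a Calder\'on--Zygmund operator is not bounded on $L^\infty$, so a bounded oscillation forcing $\tilde f$ cannot give $D^2 w\in L^\infty$ with the claimed constant; one would at best get $\int_0^{r_k}\cm(s)\,ds/s$, which is exactly what the lemma is trying to prove and hence circular. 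The first bound also runs into trouble: to use parabolic Sobolev embedding you need $p>(n+2)/2$, and to pass moments through the $L^p_{xt}$-norm you need $\p\ge p$ (Minkowski); for $\p=2$ and $n\ge 3$ these are incompatible. Relatedly, $\cm(r_N)\lesssim\int_0^\delta\cm(r)\,dr/r$ is not automatic without a doubling hypothesis on $\cm$, so the final line of your argument does not close even granting the pointwise $D^2 w$ estimate.

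The paper avoids both pitfalls by (i) working with a mixed $L^\p(\Omega;L^2_{xt})$ energy estimate for the residual (its Claim~1), valid for every $\p\ge 2$ without any Sobolev-index constraint, and (ii) never directly estimating $D^2(u^\m-u)$: the fine-scale remainder $\|u^\m_{xx}(0)-u_{xx}(0)\|_{\bL^\p}$ is obtained by \emph{continuing the telescope}, i.e.\ summing $\sum_{j\ge\m}|(u^j-u^{j+1})_{xx}|_{0;Q^{j+2}}$, each term of which concerns a \emph{homogeneous} difference and so is controlled by an $L^2\to C^\infty$ interior-smoothing estimate rather than by a CZ-type bound. This turns the remainder directly into $\int_0^{\rho^\m}\cm(r)\,dr/r$ with no doubling assumption. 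To repair your proposal: replace the claimed $L^\infty$ bound on $D^2 w^Z_N$ by further telescoping $w^Z_N=\sum_{j\ge N}(w^Z_j-w^Z_{j+1})$, observe each increment is homogeneous, and base all estimates on $L^2_{xt}$ averages rather than sup norms, as the paper does; the cutoff/Duhamel device you introduce to avoid SPDE boundary value problems is then no longer needed, since the paper's boundary value problems $u^\m=u$ on $\pd Q^\m$ can be handled by the $L^2$-energy theory directly.
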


An important consequence of Lemma \ref{Dini} is a Schauder estimate that the solution $u\in C^{2+\alpha,\alpha/2}_{x,t}(Q_{1/4};\bL^\p)$ when $f\in C^{\al}_{x}(Q_1;\bL^\p)$ and $g \in C^{1+\al}_{x}(Q_1;\bL^\p)$, for some $\al\in(0,1)$.

\begin{proof}[Outline of proof]
Without loss of generality, we may assume $X=0$.
Let $\rho = 1/2$, and denote
\[
Q^\m = Q_{\rho^\m} = Q_{\rho^\m}(0,0),
\quad
\m = 0,1,2,\cdots.
\]
Construct a sequence of Cauchy problems
\begin{align*}
\md u^\m & = [\aij u^\m_\dij + f(0,t) ]\vd t 
+ [\sik u^\m_\di + g^k(0,t) + g^k_x(0,t) \cdot x] \vd \BM^k_t
\quad \text{in } Q^\m, \\
u^\m & = u \quad \text{on } \pd Q^\m.
\end{align*}

\begin{claim}
For each $\m$, there is a unique generalised solution $u^\m$ such that $u^\m(\cdot,t)\in L^\p(\PS; C^m(B_\varepsilon))$ for any $m\ge0$ and $\varepsilon\in(0,\rho^\m)$. Moreover, for any $r<\rho^\m$ there is a constant $C = C(n,\p)$ such that
\begin{align}\label{Lp}
\|u\|_{L^\p(\PS; L^2(Q_r))}
\le C \bigl( r^{2} \|f\|_{L^\p(\PS; L^2(Q_r))} 
+ r \|g\|_{L^\p(\PS; L^2(Q_r))} \bigr).
\end{align}
\end{claim}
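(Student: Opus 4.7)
The plan is to split the claim into three parts: (i) existence and uniqueness of the generalised solution $u^\m$ on $Q^\m$, (ii) interior $C^m$-smoothness of $u^\m$, and (iii) the local $L^\p$--$L^2$ estimate.

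For (i) and (ii), the key structural feature is that on $Q^\m$ the coefficients $\aij,\sik$ depend only on $(t,\om)$ and the driving data $f(0,t)$ and $g^k(0,t)+g^k_x(0,t)\cdot x$ are affine in $x$, while the boundary datum $u|_{\pd Q^\m}$ is $\bL^\p$-integrable since $u\in C^{2,0}_{x,t}(Q_1;\bL^\p)$. Existence and uniqueness of a generalised solution then follow from Krylov's stochastic $L^\p$-theory \cite{krylov1999analytic}: extend $u|_{\pd Q^\m}$ to a function in a suitable stochastic parabolic Sobolev space on $Q^\m$, subtract it off to reduce to homogeneous boundary data, and solve the resulting problem. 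For interior smoothness, I would exploit the observation that every spatial derivative $v=D^\al u^\m$ with $|\al|\ge 2$ satisfies the homogeneous constant-(in-$x$)-coefficient SPDE
\[
\md v=\aij v_\dij\vd t+\sik v_\di\vd \BM^k_t.
\]
Mollification of $u^\m$ in $x$ then produces smooth approximants solving the same equation, and standard energy estimates from applying Itô's formula to $\xi^2|D^\be v|^2$ --- absorbing the Itô correction via \eqref{parab} --- give uniform $L^\p(\PS;H^k_\loc(Q^\m))$ bounds for every $k$. Sobolev embedding then delivers $u^\m(\cdot,t)\in L^\p(\PS;C^m(B_\eps))$ for all $m\ge 0$ and $\eps\in(0,\rho^\m)$.

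For (iii), the target is a parabolic Caccioppoli-type inequality for SPDEs. I would apply Itô's formula to $\int|u|^2\xi^2\vd x$ for a smooth space-time cut-off $\xi$ supported in $Q_{2r}$ with $\xi\equiv1$ on $Q_r$, then close the resulting energy identity using \eqref{parab}: the Itô correction from $\sik u_\di$ contributes $\sik\sjk u_\di u_\dj\xi^2$, which combines with $-2\aij u_\di u_\dj\xi^2$ coming from integration by parts on the drift $2\aij u u_\dij\xi^2$ to form the coercive quantity $-(2\aij-\sik\sjk)u_\di u_\dj\xi^2$. Raising to the $\p/2$-power, using Burkholder--Davis--Gundy on the martingale contribution, and rescaling $x\mapsto x/r$, $t\mapsto t/r^2$ then delivers the claimed $r^2$- and $r$-scalings on the forcing norms.

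The main obstacle is the interaction between the Itô correction from $\sik u_\di$ and the parabolic leading term: the extra gradient-squared contribution produced by Itô's formula is positive-definite on the wrong side of the inequality, and the whole scheme closes only because of the strict parabolicity \eqref{parab} of the combined operator $2\aij-\sik\sjk$. Preserving the asymmetric $r^2$-versus-$r$ scaling between deterministic and stochastic forcings in (iii) also requires careful tracking of powers through the cut-off and BDG steps, since $g$ enters only the stochastic integral.
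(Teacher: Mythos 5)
Your parts (i)--(ii) are consistent with the paper's route (which simply invokes Krylov's $W^2_2$-theory for the Dirichlet problem and exploits that the forcing is affine in $x$, so all second and higher spatial derivatives of $u^\m$ solve the homogeneous equation), so I will focus on part (iii), where there is a genuine gap. The estimate \eqref{Lp} has no term involving $u$ on its right-hand side, and in Claim 2 it is applied to the difference $u^\m - u$, which has \emph{zero initial and lateral boundary data} on $Q^\m$; it is therefore a global energy estimate for the Dirichlet problem on $Q_r$, not an interior estimate. Your cut-off argument with $\xi$ supported in $Q_{2r}$ produces only a Caccioppoli-type inequality: the terms $u\,\xi\xi_x u_x$ and $u^2\xi\xi_t$ generated by differentiating the cut-off cannot be controlled by $f$ and $g$, so the best you can conclude is
\[
\|u\|_{L^\p(\PS;L^2(Q_r))}
\le C \bigl( r^{2} \|f\|_{L^\p(\PS;L^2(Q_{2r}))} + r \|g\|_{L^\p(\PS;L^2(Q_{2r}))} + \|u\|_{L^\p(\PS;L^2(Q_{2r}))} \bigr),
\]
which is strictly weaker than \eqref{Lp} and does not yield the bound $J_\m \le C\rho^{2\m}\cm(\rho^\m)$ needed in Claim 2. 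Indeed, \eqref{Lp} read as an interior estimate is false: $u\equiv 1$ solves \eqref{model} with $f=g=0$.

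The repair is to drop the cut-off and use the boundary condition. Apply It\^o's formula to $\|v(\cdot,t)\|^2_{L^2(B_r)}$ for $v=u^\m-u$ directly; integration by parts produces no boundary terms, the It\^o correction combines with the drift exactly as you describe, giving the coercive quantity $\int (2\aij-\sik\sjk)v_\di v_\dj \ge \pc \int |v_x|^2$ on the good side via \eqref{parab}, and the Poincar\'e inequality $\|v\|_{L^2(B_r)} \le C r \|v_x\|_{L^2(B_r)}$ (valid precisely because $v$ vanishes on $\pd B_r$) converts the gradient coercivity into control of $v$ itself with the correct powers of $r$: the asymmetric $r^2$ versus $r$ scaling then falls out of Cauchy--Schwarz on $\int vF$ against the quadratic-variation term $\int |G|^2$. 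Finally, your plan of raising to the power $\p/2$ and applying Burkholder--Davis--Gundy presupposes that $\sup_t \|v(\cdot,t)\|_{L^2(B_r)}$ already has finite $\p$-th moments, which is not known a priori; this is exactly why the paper speaks of a truncation (stopping-time) argument to pass from $\p=2$ to general $\p$, and you should include such a localization to make the BDG step legitimate.
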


\begin{proof} 
In fact, for $\p=2$, the unique solvability and interior smoothness of $u^\m$ follows from \cite[Theorem~2.1]{krylov1994aw}.
For $\p\ge 2$, higher order $\bL^\p$-integrability \eqref{Lp} can be achieved by a truncation technique.
\end{proof}

\begin{claim}
There is a constant $C = C(n,\pc,\p)$ such that 
\begin{equation}\label{diff}
|D^m(u^\m - u^{\m+1})|_{0;Q^{\m+2}} 
\le C \rho^{(2-m)\m} \cm(\rho^\m), 
\quad 
m=1,2,\dots.
\end{equation}
\end{claim}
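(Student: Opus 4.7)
The plan is to compare both $u^\m$ and $u^{\m+1}$ with the original solution $u$ on their respective cylinders, and then apply an interior regularity estimate to the resulting difference on the smaller cylinder $Q^{\m+2}$. For $\ell\in\{\m,\m+1\}$, the function $w^\ell := u - u^\ell$ vanishes on $\pd Q^\ell$ and solves the constant-in-$x$ (frozen-coefficient) equation
\[
\md w^\ell = \bigl(\aij w^\ell_\dij + F^\ell\bigr)\vd t + \bigl(\sik w^\ell_\di + G^{\ell,k}\bigr)\vd \BM^k_t
\quad\text{on } Q^\ell,
\]
where $F^\ell(x,t) := f(x,t) - f(0,t)$ and $G^{\ell,k}(x,t) := g^k(x,t) - g^k(0,t) - g^k_x(0,t)\cdot x$ are the Taylor remainders of $f$ and $g$ at the origin. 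By the definition of the Dini modulus $\cm$ together with a first-order Taylor expansion of $g$, on $Q^\ell$ these forcing terms satisfy $\|F^\ell(x,t)\|_{\bL^\p}\le \cm(\rho^\ell)$ and $\|G^{\ell,\cdot}(x,t)\|_{\bL^\p}\le \rho^\ell\,\cm(\rho^\ell)$ pointwise.

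Next I would apply the $L^\p$-$L^2$ estimate \eqref{Lp} of Claim~1 to $w^\ell$ at radius $r=\rho^\ell$. Combining it with $|Q^\ell|\simeq \rho^{(n+2)\ell}$ and the pointwise bounds above yields
\[
\|w^\ell\|_{L^\p(\PS;L^2(Q^\ell))} \le C\,\rho^{(2+(n+2)/2)\ell}\,\cm(\rho^\ell),
\quad \ell\in\{\m,\m+1\}.
\]
Now $v := u^\m - u^{\m+1} = w^{\m+1} - w^\m$ solves the homogeneous version of the frozen-coefficient equation on $Q^{\m+1}$, so the final step is an interior derivative-to-$L^2$ estimate
\[
|D^m v|_{0;Q^{\m+2}} \le C\,\rho^{-(m + (n+2)/2)(\m+1)}\,\|v\|_{L^\p(\PS;L^2(Q^{\m+1}))},
\]
derived by parabolic rescaling $\tilde v(y,s):=v(\rho^{\m+1}y,\rho^{2(\m+1)}s)$, which reduces the question to the unit cylinder, in combination with the smoothness asserted in Claim~1. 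Bounding $\|v\|_{L^\p(\PS;L^2(Q^{\m+1}))}$ by the triangle inequality and exploiting the monotonicity $\cm(\rho^{\m+1})\le \cm(\rho^\m)$ and $Q^{\m+1}\subset Q^\m$, and then absorbing the $\m$-independent factor $\rho^{-m-(n+2)/2}$ into the constant, produces the desired bound \eqref{diff}.

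The main obstacle will be establishing the quantitative interior derivative estimate with the correct parabolic scaling. Claim~1 supplies qualitative $C^m$-smoothness of frozen-coefficient solutions, but the sharp dependence on the cylinder radius requires a dedicated argument: after rescaling to the unit cylinder, one combines a Caccioppoli-type energy inequality with Sobolev--Moser iteration in the spatial variable and then takes $L^\p$-norms in $\om$. The usual complication that stochastic integrals obstruct purely $\om$-pointwise arguments can be handled by working in $L^\p(\PS)$-valued Sobolev spaces, in the spirit of Krylov's $L^p$-theory for SPDEs. Once this rescaling-invariant interior estimate is in place, the remainder of the proof reduces to straightforward bookkeeping of powers of $\rho^\m$.
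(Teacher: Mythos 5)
Your proposal follows essentially the same route as the paper: both $u^{\m}-u$ and $u^{\m+1}-u$ are controlled in $L^\p(\PS;L^2)$ via the estimate \eqref{Lp} of Claim~1 applied to the Taylor-remainder data, and then the homogeneous equation satisfied by $u^{\m}-u^{\m+1}$ on $Q^{\m+1}$ is combined with a rescaled interior derivative-to-$L^2$ estimate and the triangle inequality; your power counting ($\rho^{(2-m)\m}$ after absorbing the $\m$-independent factors) matches the paper's, which merely phrases the same computation with normalized averages $\fint$. The one ingredient you flag as the main obstacle --- the scaling-invariant interior estimate for the homogeneous SPDE --- is exactly the step the paper itself leaves as a ``delicate computation,'' so your sketch is at the same level of completeness as the original.
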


\begin{proof}
Note that $(u^\m-u^{\m+1})$ satisfies a homogeneous equation. 
By a delicate computation, we have
\[
|D^m (u^\m - u^{\m+1})|_{0;Q^{\m+2}}
\le C \rho^{-m \m} 
\biggl{\|} \fint_{Q^{\m+1}} (u^\m - u^{\m+1})^2 \vd X \biggr{\|}_{\bL^{\p/2}}^{1/2}
=: I_{\m,m}.
\]
On the other hand, $(u^\m-u)$ satisfies a zero initial condition. 
By Claim 1, 
\[
J_\m := \biggl{\|} \fint_{Q^{\m}} (u^\m - u)^2 \vd X \biggr{\|}_{\bL^{\p/2}}^{1/2}
\le C \rho^{2\m} \cm(\rho^\m).
\]
Thus, Claim 2 is proved, since
\[
I_{\m,m} \le C \rho^{-m\m} (J_\m + J_{\m+1}) \le C \rho^{(2-m)\m}\cm(\rho^\m).
\]
It is worth remarking that instead of using the maximum principle to estimate the term $|D^m(u^\m - u^{\m+1})|_{0;Q^{\m+2}}$ as in \cite{Wang2006}, we obtain the inequality \eqref{diff} by subtle integral estimates. 
\end{proof}

\begin{claim}
$\{u^\m_{xx}(0)\}$ converges in $\bL^\p$ (here $0\in\R^{n+1}$), and the limit is $u_{xx}(0)$.
\end{claim}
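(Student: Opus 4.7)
My plan has two stages: first to show $\{u^\m_{xx}(0)\}$ is Cauchy in $\bL^\p$ with some limit $v$, and then to identify $v = u_{xx}(0,0)$. For the first stage, Claim 2 with $m = 2$ evaluated at the origin gives
\[
\|u^\m_{xx}(0) - u^{\m+1}_{xx}(0)\|_{\bL^\p} \leq |D^2(u^\m - u^{\m+1})|_{0; Q^{\m+2}} \leq C\cm(\rho^\m).
\]
Since $\cm$ is nondecreasing and $\rho = 1/2$, the Dini hypothesis $\int_0^1 \cm(r)/r\,\vd r < \infty$ is equivalent to $\sum_\m \cm(\rho^\m) < \infty$. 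Hence the telescoping series $\sum_\m(u^{\m+1}_{xx}(0) - u^\m_{xx}(0))$ converges absolutely in $\bL^\p$, so $u^\m_{xx}(0) \to v$ in $\bL^\p$ for some $v$.

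For the identification $v = u_{xx}(0,0)$, the plan is to test both $u$ and $u^\m$ against a parabolically rescaled mollifier and compare. Fix $\phi \in C^\infty_c(Q_1)$ with $\int \phi\,\vd X = 1$ and let $\phi^{(\m)}(x,t) := \rho^{-(n+2)\m}\phi(x/\rho^\m, t/\rho^{2\m})$, which is supported in $Q^\m$. Define
\[
A_\m := \int \phi^{(\m)} u_{xx,ij}\,\vd X, \qquad B_\m := \int \phi^{(\m)} u^\m_{xx,ij}\,\vd X.
\]
Since $u \in C^{2,0}_x$, $u_{xx}$ is $\bL^\p$-continuous at the origin, so $A_\m \to u_{xx,ij}(0,0)$. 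By twofold integration by parts in $x$---valid because both $u$ and $u^\m$ possess strong $\bL^\p$-second spatial derivatives---we have $A_\m - B_\m = \int \partial_{ij}\phi^{(\m)}(u - u^\m)\,\vd X$, and Cauchy-Schwarz together with the $J_\m$-estimate in Claim 1 give
\[
\|A_\m - B_\m\|_{\bL^\p} \leq \|\partial_{ij}\phi^{(\m)}\|_{L^2(Q^\m)} \cdot \|u - u^\m\|_{L^\p(\PS; L^2(Q^\m))} \leq C\cm(\rho^\m) \to 0,
\]
using the scalings $\|\partial_{ij}\phi^{(\m)}\|_{L^2} \asymp \rho^{-(n+2)\m/2-2\m}$ and $\|u - u^\m\|_{L^\p(\PS;L^2(Q^\m))} \leq C\rho^{(n+2)\m/2+2\m}\cm(\rho^\m)$. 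Combined with the separate convergence $B_\m \to v_{ij}$ discussed below, this forces $v_{ij} = u_{xx,ij}(0,0)$.

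The main obstacle is exactly the convergence $B_\m \to v_{ij}$, which amounts to the oscillation bound $\sup_{X \in Q^\m}\|u^\m_{xx,ij}(X) - u^\m_{xx,ij}(0,0)\|_{\bL^\p} \to 0$ uniformly in $\m$. A uniform Hölder bound on $u^\m_{xx}$ is not at our disposal (that would essentially be the conclusion of the whole Schauder theorem). Instead, I would invoke Claim 2 for higher values $m = 3, 4$ and telescope back to $u^0$ to bound $|D^k u^\m|_{0; Q^{\m+2}}$ for $k = 3, 4$, together with the SPDE satisfied by $u^\m_{xx}$, obtained by differentiating the constant-coefficient model equation twice in $x$ (observing that the forcing $f(0,t) + g^k_x(0,t)\cdot x$ for $u^\m$ has vanishing second spatial derivative), and Burkholder-Davis-Gundy estimates for the temporal part. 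The resulting oscillation estimate reduces to Cauchy-product sums of the form $\rho^{j\m}\sum_{k<\m}\rho^{-jk}\cm(\rho^k)$ for $j = 1, 2$, which vanish as $\m \to \infty$ under the Dini hypothesis by a standard two-part splitting (truncate at some $\m_0$, let $\m\to\infty$ first, then $\m_0 \to \infty$).
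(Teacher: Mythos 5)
Your first stage coincides exactly with the paper's: Claim 2 with $m=2$, together with the equivalence of the Dini integral with $\sum_\m \cm(\rho^\m)$, gives absolute convergence of the telescoping series and hence a limit $v$ in $\bL^\p$. For the identification $v=u_{xx}(0)$ the two arguments genuinely diverge. The paper exploits $\p\ge 2$ to reduce the problem to showing $u^\m_{xx}(0)\to u_{xx}(0)$ in the weaker norm $\bL^2$, which it then attributes to its (unspecified) ``integral estimates''; your route stays in $\bL^\p$ and is fully explicit: mollify, integrate by parts twice, control $A_\m-B_\m$ by the $J_\m$-bound (which, to be precise, is derived inside the proof of Claim 2 from the $L^\p(\PS;L^2)$ estimate \eqref{Lp} of Claim 1, rather than being stated in Claim 1 itself), and control $B_\m-u^\m_{xx}(0)$ by an oscillation estimate for $u^\m_{xx}$ over the support of $\phi^{(\m)}$. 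That oscillation estimate, which you correctly single out as the main obstacle, is not wasted effort: it is essentially the content of the paper's Claim 4 --- telescoping through the homogeneous differences $h^\iota$ and the bounds \eqref{u0est}--\eqref{u0-eq} --- augmented by the time oscillation obtained from the equation satisfied by $u^\m_{xx}$ and Burkholder--Davis--Gundy, and your sums $\rho^{j\m}\sum_{k<\m}\rho^{-jk}\cm(\rho^k)$ do vanish under the Dini hypothesis exactly as you describe. What the paper's reduction to $\bL^2$ buys is the option of quoting soft $L^2$-theory for $u^\m\to u$; what your argument buys is a self-contained proof directly in $\bL^\p$ whose only heavy ingredient is machinery needed for Claim 4 anyway. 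Two small repairs: take $\phi$ supported in $Q_{1/4}$ so that $\phi^{(\m)}$ is supported in $Q^{\m+2}$, where the Claim-2 bounds on $D^m h^\iota$ for all $\iota\le\m$ are actually available; and note that $A_\m\to u_{xx,ij}(0)$ uses the strong $\bL^\p$-continuity of $u_{xx}$ at the origin, which is part of the hypothesis $u\in C^{2,0}_{x,t}(Q_1;\bL^\p)$ and should be cited explicitly.
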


\begin{proof}
By Claim 2 and the assumption of Dini continuity, 
\[
\sum_{\m\ge 1} |(u^\m - u^{\m+1})_{xx}|_{0;Q^{\m+2}} 
\le C \sum_{\m\ge 1} \cm(\rho^\m) 
\le C \int_0^1 {\cm(r) \over r} \vd r
< \infty,
\]
which implies that $u^\m_{xx}(0)$ converges in $\bL^\p$.
Since $\p\ge 2$, it suffices to show that
\begin{equation}\label{conv}
\lim_{\m\to\infty}\|u^\m_{xx}(0) - u_{xx}(0)\|_{\bL^2}
=0,
\end{equation}
which can also be achieved straightforward by our integral estimates. 
\end{proof}

Now for any $Y=(y,s)\in Q_{1/4}$ we can select an $\m$ such that  
$\pdist{Y} \in [\rho^{\m+2}, \rho^{\m+1})$.
By decomposition, one has
\begin{align}  \label{main est}
& \|u_{xx}(Y) - u_{xx}(0)\|_{\bL^\p} \nonumber \\
& \le \|u^\m_{xx} (Y) - u^\m_{xx} (0)\|_{\bL^\p}
+ \|u^\m_{xx} (0) - u_{xx} (0)\|_{\bL^\p}
+ \|u^\m_{xx} (Y) - u_{xx}(Y)\|_{\bL^\p} \\
& =: I_1 + I_2 + I_3. \nonumber
\end{align}

\begin{claim}
$I_1 \leq C \delta M_1 + C  \delta
\int_{\delta}^1  {\cm(r) \over r^2} \vd r$, where $\delta:=\pdist{Y}$ and $M_1$ was given in \eqref{Dini.est}.
\end{claim}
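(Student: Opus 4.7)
I would write $Y=(y,s)$ and decompose
\begin{equation*}
I_1 \le \|u^\m_{xx}(y,s) - u^\m_{xx}(0,s)\|_{\bL^\p} + \|u^\m_{xx}(0,s) - u^\m_{xx}(0,0)\|_{\bL^\p},
\end{equation*}
then bound the spatial piece by the fundamental theorem of calculus in $x$, yielding $|y|\cdot|u^\m_{xxx}|_{0;Q^{\m+1}}$. The key observation for the temporal piece is that, because the source terms $f(0,t)$ and $g^k(0,t)+g^k_x(0,t)\cdot x$ in the equation defining $u^\m$ are at most affine in $x$, the Hessian $u^\m_{xx}$ satisfies the \emph{homogeneous} SPDE
\begin{equation*}
\md u^\m_{xx} = \aij u^\m_{xxij}\vd t + \sik u^\m_{xxi}\vd \BM^k_t.
\end{equation*}
Integrating this between $t=s$ and $t=0$ at $x=0$, then applying Minkowski's inequality to the drift and the Burkholder--Davis--Gundy inequality to the It\^o integral, I would obtain
\begin{equation*}
\|u^\m_{xx}(0,s)-u^\m_{xx}(0,0)\|_{\bL^\p} \le C|s|\,|u^\m_{xxxx}|_{0;Q^{\m+1}} + C|s|^{1/2}\,|u^\m_{xxx}|_{0;Q^{\m+1}}.
\end{equation*}
Since $|y|\le\delta$, $|s|^{1/2}\le\delta$, and $|s|\le\delta^2$, the two parts would combine to
\begin{equation*}
I_1 \le C\delta\,|u^\m_{xxx}|_{0;Q^{\m+1}} + C\delta^2\,|u^\m_{xxxx}|_{0;Q^{\m+1}}.
\end{equation*}

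To control $|u^\m_{xxx}|$ and $|u^\m_{xxxx}|$ on $Q^{\m+1}$, I would exploit the telescoping identity $u^\m = u^0 + \sum_{j=0}^{\m-1}(u^{j+1}-u^j)$ on the nested family $Q^\m\subset\cdots\subset Q^0$. Since $Q^{\m+1}\subset Q^{j+2}$ whenever $j\le\m-1$, Claim~2 applied with $m=3,4$ yields
\begin{equation*}
|D^k(u^j-u^{j+1})|_{0;Q^{\m+1}}\le C\rho^{(2-k)j}\cm(\rho^j),\qquad k=3,4.
\end{equation*}
For the base term, $u^0$ solves a Dirichlet-type problem on $Q_1$ whose forcing and boundary values are dominated by $M_1$, so standard interior estimates for the model SPDE would furnish $|D^k u^0|_{0;Q^1}\le CM_1$, whence $|D^k u^0|_{0;Q^{\m+1}}\le CM_1$ uniformly in $\m$. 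Summing gives
\begin{equation*}
|u^\m_{xxx}|_{0;Q^{\m+1}} \le CM_1+C\sum_{j=0}^{\m-1}\rho^{-j}\cm(\rho^j),\qquad
|u^\m_{xxxx}|_{0;Q^{\m+1}} \le CM_1+C\sum_{j=0}^{\m-1}\rho^{-2j}\cm(\rho^j).
\end{equation*}

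Inserting these into the bound for $I_1$, the $M_1$-contributions would produce at most $C\delta M_1$. Comparing the geometric sum to the integral (with $\delta\sim\rho^\m$) gives $\delta\sum_{j=0}^{\m-1}\rho^{-j}\cm(\rho^j)\le C\delta\int_\delta^1\cm(r)/r^2\vd r$; for the second sum, using $\rho^{2(\m-j)}\le\rho^{\m-j}$ for $j\le\m$,
\begin{equation*}
\delta^2\sum_{j=0}^{\m-1}\rho^{-2j}\cm(\rho^j) = \sum_{j=0}^{\m-1}\rho^{2(\m-j)}\cm(\rho^j) \le \rho^\m\sum_{j=0}^{\m-1}\rho^{-j}\cm(\rho^j) \le C\delta\int_\delta^1\frac{\cm(r)}{r^2}\vd r,
\end{equation*}
so the same Dini-integral bound absorbs this term, proving the claim. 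The hardest step will be to make the interior bounds $|D^k u^0|_{0;Q^1}\le CM_1$ for $k=3,4$ quantitative in the stochastic setting: this requires upgrading Claim~1's qualitative $L^\p(\PS;C^m)$-regularity to effective estimates, which should follow by iterating the integral-estimate machinery underlying Claim~2 on the fixed pair $(Q^0,Q^1)$.
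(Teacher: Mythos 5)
Your argument is correct and rests on essentially the same ingredients as the paper's proof: Claim~2 applied to the dyadic differences (the paper uses $m=3$ and the homogeneous equation for each $h^\iota=u^\iota-u^{\iota-1}$ to bound $\|h^\iota_{xx}(Y)-h^\iota_{xx}(0)\|_{\bL^\p}$ directly, while you first split the increment of $u^\m_{xx}$ into space and time parts and then telescope the resulting $D^3$, $D^4$ sup-norms with $m=3,4$ --- the two orderings are interchangeable), interior estimates for $u^0$ producing the $C\delta M_1$ term, and the same dyadic-sum versus Dini-integral comparison. The step you flag as hardest, namely quantitative interior bounds $|D^k u^0|_{0;Q^1}\le CM_1$, is likewise what the paper needs for its estimate \eqref{u0est}, so your proposal introduces no gap beyond what the paper itself assumes.
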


\begin{proof}
The proof is by induction. 
When $\m=0$, note that $u_{xx}^0$ satisfies the following homogeneous equation:
\begin{align*}
\md u^0_{xx} & = \aij \Dij u^0_{xx} \vd t + \sik \Di u^0_{xx} \vd \BM^k_t
\quad \text{in } Q_{3/4}.
\end{align*}
From interior estimates, we have
\begin{equation}\label{u0est}
\|u_{xx}^0(X) - u_{xx}^0(Y)\|_{\bL^\p}
\le C M_1 \pdist{X-Y},
\quad\forall\,X,Y\in Q_{1/4}.
\end{equation}
When $\m \ge 1$, denote $h^\iota = u^\iota - u^{\iota-1}$, for $\iota = 1,2,\dots,\m$,
then $h^\iota$ satisfies 
\begin{align*}
\md h^\iota & = \aij h^\iota_\dij \vd t + \sik h^\iota_\di \vd \BM^k_t
\quad \text{in } Q^{\iota}.
\end{align*}
From Claim 2, we have for $-\rho^{2(\m+1)} \le t \le 0$ and $|x| \le \rho^{\m+1}$,
\begin{equation}\label{u0-eq}
\|h^\iota_{xx}(x,t) - h^\iota_{xx}(0,0)\|_{\bL^\p}
\le  C \rho^{\m-\iota}\cm(\rho^{\iota-1}).
\end{equation}
Using \eqref{u0est} and \eqref{u0-eq}, we can obtain the estimate 
\begin{equation*}\label{ukest}
\begin{split}
I_1
& \le \|u^{\m-1}_{xx}(Y) - u^{\m-1}_{xx}(0)\|_{\bL^\p}
+ \|h^\m_{xx}(Y) - h^\m_{xx}(0)\|_{\bL^\p} \\
& \le \|u^{0}_{xx}(Y) - u^{0}_{xx}(0)\|_{\bL^\p}
+ \sum_{\iota=1}^\m \|h^\iota_{xx}(Y) - h^\iota_{xx}(0)\|_{\bL^\p} \\
& \le C \delta M_1 + C  \delta
\int_{\delta}^1  {\cm(r) \over r^2} \vd r.
\end{split}
\end{equation*}
Claim 4 is proved.
\end{proof}

\begin{claim} 
$I_i \leq C \int_0^{\delta} \! {\cm(r) \over r} \vd r$, for $i=2,3$.
\end{claim}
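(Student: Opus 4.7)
The plan is to estimate $I_2$ and $I_3$ by a common telescoping-plus-integral-conversion strategy, built on Claims 2 and 3.

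For $I_2$, the convergence $u^\iota_{xx}(0) \to u_{xx}(0)$ in $\bL^\p$ supplied by Claim 3 gives the unconditional series
\[
u_{xx}(0) - u^\m_{xx}(0) = \sum_{\iota \ge \m} \bigl(u^{\iota+1}_{xx}(0) - u^{\iota}_{xx}(0)\bigr),
\]
and Claim 2 with $m=2$ bounds each summand by $C\cm(\rho^\iota)$. Since $\cm$ is non-decreasing, the monotonicity inequality $\cm(\rho^\iota) \le |\ln\rho|^{-1} \int_{\rho^\iota}^{\rho^{\iota-1}} \cm(r)/r \vd r$ converts the geometric sum into an integral, yielding $I_2 \le C\int_0^{\rho^{\m-1}} \cm(r)/r \vd r$. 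Because $\delta \in [\rho^{\m+2},\rho^{\m+1})$ forces $\rho^{\m-1}$ to be comparable to $\delta$, the discrepancy between $\int_0^{\rho^{\m-1}}$ and $\int_0^\delta$ is at most $C\cm(c\delta)$ for some constant $c$, which is itself dominated by the third term $\delta\int_\delta^1 \cm(r)/r^2 \vd r$ in the Lemma's conclusion via the standard inequality $\cm(c\delta) \le C\delta \int_\delta^1 \cm(r)/r^2 \vd r$.

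For $I_3$ one repeats the argument with the center shifted to $Y$. Introduce the auxiliary family $\{\tilde v^\iota\}$ solving the Cauchy problem on $Q^\iota(Y) := B_{\rho^\iota}(y)\x(s-\rho^{2\iota},s)$ with source terms frozen at $Y$, namely $f(y,t)$ and $g^k(y,t) + g^k_x(y,t)\cdot(x-y)$, and with boundary values $\tilde v^\iota = u$ on $\pd Q^\iota(Y)$. Since $\pdist{Y} < \rho^{\m+1}$, the cylinder $Q^\iota(Y)$ lies in $Q^\m(0)$ for $\iota \ge \m+2$ (and more generally in $Q_1$ for $\iota$ sufficiently large), so the sequence is well-defined. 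Claims 1--3 translate verbatim to this family, producing $\tilde v^\iota_{xx}(Y) \to u_{xx}(Y)$ in $\bL^\p$ and, by the same telescoping-and-monotonicity calculation as above,
\[
\|\tilde v^{\m+2}_{xx}(Y) - u_{xx}(Y)\|_{\bL^\p} \le C \int_0^\delta \frac{\cm(r)}{r} \vd r.
\]
It remains to compare $u^\m_{xx}(Y)$ with $\tilde v^{\m+2}_{xx}(Y)$. On the overlap cylinder $Q^{\m+2}(Y) \subset Q^\m$, the difference $w := u^\m - \tilde v^{\m+2}$ solves a frozen linear SPDE whose inhomogeneities $f(0,t) - f(y,t)$ and $g^k(0,t) + g^k_x(0,t)x - g^k(y,t) - g^k_x(y,t)(x-y)$ are of $\bL^\p$-size $O(\cm(\delta))$ (a short Taylor expansion using $|y|\le\delta$ and $\delta<\rho^{\m+1}$), while the boundary trace $w|_{\pd Q^{\m+2}(Y)} = (u^\m-u)|_{\pd Q^{\m+2}(Y)}$ can be controlled by combining the $L^\p L^2$ estimate from Claim 1 applied to $u^\m - u$ with interior smoothness. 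The interior $C^2$-regularity for frozen parabolic SPDEs then yields $\|w_{xx}(Y)\|_{\bL^\p} \le C\cm(\delta) \le C\int_0^\delta \cm(r)/r \vd r$ via the same monotonicity trick.

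The most delicate step is the comparison in $I_3$: the difference $w$ does not have zero boundary data, so one must invoke interior regularity estimates for frozen-coefficient parabolic SPDEs with both nontrivial source and nontrivial boundary, all carefully scaled to the cylinder $Q^{\m+2}(Y)$ of size $\rho^{\m+2} \asymp \delta$. The plain telescoping that delivers $I_2$ does not suffice for $I_3$ because the original sequence $\{u^\iota\}$ loses its $\bL^\p$-convergence at $Y$ once $\iota$ exceeds the level at which $Y \in Q^\iota$, which is precisely why the shifted auxiliary family $\{\tilde v^\iota\}$ must be introduced.
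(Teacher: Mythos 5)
Your $I_2$ argument is essentially the paper's: telescope the series $u_{xx}(0)-u^\m_{xx}(0)=\sum_{\iota\ge\m}(u^{\iota+1}_{xx}(0)-u^\iota_{xx}(0))$ using Claim~2, convert the geometric sum to an integral via $\cm(\rho^\iota)\le|\ln\rho|^{-1}\int_{\rho^\iota}^{\rho^{\iota-1}}\cm(r)/r\vd r$, and absorb the mismatch between the resulting upper limit and $\delta$ into the third term $\delta\int_\delta^1\cm(r)/r^2\vd r$ of Lemma~\ref{Dini}; this is a clean and correct unpacking. Your identification of why $I_3$ cannot be handled by naively re-running the same telescoping at $Y$ --- the centred sequence $\{u^\iota\}$ stops controlling the point $Y$ once $\iota$ passes the level at which $Y\in Q^{\iota+2}$ --- is the correct key observation behind the paper's one-line prescription of ``shifting the centre of domains,'' and introducing a $Y$-centred, $Y$-frozen auxiliary family $\{\tilde v^\iota\}$ for $\iota\ge\m+2$ is the natural way to realize it.

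However, there is a genuine error in your final inequality for $I_3$. You close the comparison estimate with ``$\cm(\delta)\le C\int_0^\delta\cm(r)/r\vd r$ via the same monotonicity trick,'' but this inequality is \emph{false} for a general nondecreasing Dini modulus. For example, $\cm(r)=\me^{-1/r}$ is Dini (substituting $s=1/r$ gives $\int_0^1\cm(r)/r\vd r=\int_1^\infty\me^{-s}/s\vd s<\infty$), yet $\int_0^\delta\cm(r)/r\vd r\asymp\delta\,\me^{-1/\delta}$, so the ratio $\cm(\delta)/\int_0^\delta\cm(r)/r\vd r\asymp1/\delta\to\infty$. Monotonicity gives you $\cm(\delta)\lesssim\delta\int_\delta^1\cm(r)/r^2\vd r$ (which is exactly what you invoked, correctly, for the tail discrepancy in $I_2$), not a bound by $\int_0^\delta\cm(r)/r\vd r$. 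So the comparison term $\|w_{xx}(Y)\|_{\bL^\p}$ from your analysis should be absorbed into the third term of Lemma~\ref{Dini}'s conclusion rather than into $\int_0^\delta\cm(r)/r\vd r$. This means you have actually proved $I_3\le C\bigl(\int_0^\delta\cm(r)/r\vd r+\delta\int_\delta^1\cm(r)/r^2\vd r\bigr)$, which still suffices to establish Lemma~\ref{Dini} but is strictly weaker than Claim~5 as literally stated. Secondary point: your bound $\|w_{xx}(Y)\|_{\bL^\p}\le C\cm(\delta)$ requires not just the $O(\cm(\delta))$ source estimate and the boundary-trace control you mention, but also a scaled interior $C^2$-estimate on $Q^{\m+2}(Y)$, and a more careful bookkeeping actually yields $\cm(\rho^\m)\approx\cm(c\delta)$ with $c>1$, not $\cm(\delta)$; this only compounds the need for the doubling-free absorption via the third term.
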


\begin{proof}
The estimate of $I_2$ is a refinement of {convergence} in Claim 3. 
In fact, by Claim 2 we have the precise estimate 
\begin{equation}\label{conv.rate}
I_2=\|u^\m_{xx}(0) - u_{xx}(0)\|_{\bL^\p}
\le \sum_{j\ge \m} |(u^j - u^{j+1})_{xx}|_{0;Q^{j+2}} 
\le C \int_0^{\rho^\m} \! {\cm(r) \over r} \vd r,
\end{equation}
where $C = C(n,\pc,\p)$.
We can obtain a similar estimate for $I_3$ by shifting the centre of domains. 
\end{proof}

To sum up, Lemma \ref{Dini} is proved.
\end{proof}

Having proved Lemma \ref{Dini} we are in a position to derive the global estimate of solutions of \eqref{chief} and complete the proof of Theorem \ref{schauder}.

\begin{proof}[Outline of proof of Theorem \ref{schauder}]
The proof is by an argument of frozen coefficients. 
Denote $\Q_{r,\tau} = B_r \x (0,\tau)$, and let
\[
{M}^\tau_{x,r}(u) 
= \sup_{0\le t \le \tau}
\biggl(\fint_{B_r(x)} \!\E\,|u(t,y)|^\p \vd y \biggr)^{1/\p}
\quad
{M}^\tau_{r}(u) = \sup_{x\in\R^n}{M}^\tau_{x,r}(u).
\]
By multiplying cut-off functions and applying Lemma \ref{Dini} we can get
\begin{align}\label{pf404}
|u_{xx}|_{(\al,\al/2);\Q_{\rho/2,\tau}}
\le C \Bigl(  {M}^\tau_{0,\rho}(u)
+ |f|_{\al;\Q_{\rho,\tau}} 
+ |g|_{1+\al;\Q_{\rho,\tau}} \Bigr),
\end{align}
for some sufficiently small $\rho>0$.  
The derivation of \eqref{pf404} involves a rather delicate computation, which makes use of interpolation inequalities in H\"older spaces 
(see \cite[Lemma~6.35]{gilbarg2001elliptic} or \cite[Theorem~3.2.1]{krylov1996lectures}).
Since the centre of domains can shift to any point $x \in \R^n$, we obtain 
\begin{align}\label{pf403}
|u|_{(2+\al,\al/2);\Q_{\tau}}
\le C \Bigl( {M}^\tau_{\rho}(u)
+ |f|_{\al;\Q_{\tau}} 
+ |g|_{1+\al;\Q_{\tau}} \Bigr),
\end{align}
where $C = C(n,\pc,\p,\al)$.

To estimate ${M}_\rho^{\tau}(u)$, applying It\^o's formula, and using H\"older and Sobolev-Gagliargo-Nirenberg inequalities, we can get
\begin{align*}
{M}^\tau_{\rho}(u) \le
C_1 \tau ({M}^\tau_{\rho}(u) + |u_{xx}|_{0;Q_{\rho,\tau}} + |f|_{0;\Q_{\tau}} 
+ |g|_{0;\Q_{\tau}}),
\end{align*}
where $C_1 = C_1(n,\pc,\p)$.
Letting $\tau = (2CC_1 + C_1)^{-1}$,
by virtue of \eqref{pf403} we obtain 
\begin{align}\label{tau}
|u|_{(2+\al,\al/2);\Q_{\tau}}
\le C_0 \bigl( |f|_{\al;\Q_{\tau}} 
+ |g|_{1+\al;\Q_{\tau}} \bigr),
\end{align}
where $C_0 = C_0(n,\pc,\p,\al)$.

Finally, the proof of \eqref{schauder.est} and Theorem \ref{schauder} is completed by induction.
\end{proof}

\bibliographystyle{amsalpha}
\small{
\bibliography{ref_spde}
}

\end{document}